\newtheorem{thm}{Theorem}[section]
\newtheorem{prop}[thm]{Proposition}
\newtheorem{lem}[thm]{Lemma}
\newtheorem{cor}[thm]{Corollary}
\theoremstyle{definition}
\newtheorem{ex}[thm]{Example}
\theoremstyle{definition}
\newtheorem{rem}[thm]{Remark}
\newcommand{\mr}{\mathbb R}
\newcommand{\ceil}[1]{\lceil #1 \rceil}
\newcommand{\n}{\mathbb N}
\newcommand{\mq}{\mathbb Q}
\newcommand{\ra}{\rightarrow}
\newcommand{\Tgg}{T_{\gamma_-,\gamma_+}}
\newcommand{\Tggn}{T_{\gamma_-,\gamma_+,n}}
\newcommand{\Taa}{T_{\alpha_-,\alpha_+}}
\newcommand{\gp}{\gamma_+}
\newcommand{\gm}{\gamma_-}
\newcommand{\zn}{(0,+\infty)}
\newcommand{\jn}{(1,+\infty)}
\newcommand{\w}{\textnormal{int}}
\newcommand{\cl}{\textnormal{cl}}
\begin{document}
\title{Extension theorem for simultaneous q-difference equations and some its consequences}

\author{Witold Jarczyk and Pawe{\l} Pasteczka}

\date{}

\maketitle
\begin{abstract} Given a set $T \subset (0, +\infty)$, intervals $I\subset (0, +\infty)$ and $J\subset \mr$, as well as functions $g_t:I\times J\ra J$ with $t$'s running through the set 
\[ T^{\ast}:=T \cup \big\{t^{-1}\colon t \in T\big\}\cup\{1\} \]
we study the simultaneous $q$-difference equations
\[
\varphi(tx)=g_t\left(x,\varphi(x)\right), \qquad t \in T^{\ast},
\]
postulated for $x \in I\cap t^{-1}I$; here the unknown function $\varphi$ is assumed to map $I$ into $J$. We prove an Extension theorem stating that if $\varphi$ is continuous [analytic] on a nontrivial subinterval of $I$, then $\varphi$ is continuous [analytic] provided $g_t, t \in T^{\ast}$, are continuous [analytic]. The crucial assumption of the Extension theorem is formulated with the help of the so-called limit ratio $R_T$ which is a uniquely determined number from $[1,+\infty]$, characterising some density property of the set $T^{\ast}$. As an application of the Extension theorem we find the form of all continuous on a subinterval of $I$ solutions $\varphi:I \ra \mr$ of the simultaneous equations
\[
\varphi(tx)=\varphi(x)+c(t)x^p, \qquad t\in T,
\] 
where $c:T\ra \mr$ is an arbitrary function, $p$ is a given real number and $\sup I > R_T \inf I$.
\vspace{0,2cm}

\begin{flushleft}
{\bf Mathematics Subject Classification (2010).} Primary 39A13, 39B72; Secondary 39B12, 39B22.
\end{flushleft}

\begin{flushleft}
{\bf Keywords.} q-difference equations, Simultaneous equations, Equations on restricted domains, Extension of solutions, Form of continuous solutions. 
\end{flushleft}
\end{abstract}

\section*{Introduction} 
\noindent
Let $T \subset (0, +\infty)$ be an arbitrary nonempty set and $I\subset (0, +\infty)$ and $J \subset \mr$ be any intervals. Given functions $g_t$, $t\in T$, consider the simultaneous equations
\begin{eqnarray}\label{e1}
\varphi(tx)=g_t\left(x,\varphi(x)\right), \qquad t \in T,
\end{eqnarray}
postulated for $x \in I_t:= I\cap t^{-1}I$. Nonlinear equations \eqref{e1} are discussed in a great number of items cited in \cite[Chap. 3]{Ku} and \cite[Chap. 5]{KCG} (see also \cite[Sect. 1 and 7]{BJ}). For the first time simultaneous equations \eqref{e1} were studied in \cite{J}. They generalize several other systems (see, e.g., \cite{JP} and \cite[Lemma~6.3]{JJMM}). Repeating the argument used in \cite{JP} we see that it is enough to take into account only those $t$'s from $T$ that $I_t\neq \emptyset$. In what follows, for any $\varphi: I\ra \mr$ the phrase ''$\varphi$ is a solution of equations \eqref{e1}'' means ''$\varphi$ satisfies the equality $\varphi(tx)=g_t\left(x,\varphi(x)\right)$ for all $t \in T$ and $x \in I_t$''.

Notice that all systems of the form \eqref{e1} can be iterated (with careful restriction imposed on the domain of $x$). Motivated by this fact, in \cite{JP} we introduced the following notions which turned out to be useful. First define 
\[
T^{\ast}:=T\cup T^{-1}\cup \{1\},
\]
where $T^{-1}:=\left\{t^{-1}: t \in T\right\}$. Then for any numbers $\gm \in (0,1]$ , $\gp \in [1,+\infty)$ and $n \in \n$ put 
\begin{eqnarray*}
&&T_{\gm,\gp,n}:=\left\{t \in \zn:  \mbox{ there exist } t_1, \ldots, t_n \in T^{\ast} \, \mbox{ such that } \right. \\
&& \hspace{5cm}  t= t_1\cdot \ldots \cdot t_n \mbox{ and } \gm\leq t_1\cdot \ldots \cdot t_k \leq \gp    \\
&& \hspace{4.8cm} \left. \mbox{ for all } k=1, \ldots , n\right\}.
\end{eqnarray*}
Moreover, we denote
\[
\Tgg :=\bigcup^{\infty}_{n=1}T_{\gm,\gp,n}.
\]

It turns out that the density of $\Tgg$ is of crucial importance. For example, let us recall the main result from the paper \cite{JP} devoted to the simultaneous $q$-difference equations
\begin{eqnarray}\label{e2}
\varphi(tx)=\varphi(x)+c(t)x^p, \qquad t \in T.
\end{eqnarray}
This is a special case of \eqref{e1} with functions $g_t$, $t \in T$, given by $g_t(x,y)=y+c(t)x^p$. In what follows we accept the convention that given any $a \in (0,+\infty]$ the symbols $a/0$ and $\frac{a}{0}$ mean $+\infty$. This is quite natural as all variables $x$'s and parameters $t$'s appearing here run through sets of positive numbers only. 

The following result has been proved in \cite[Theorem]{JP}.

\vspace{0,2cm}

\noindent{\bf Theorem JP }{\it
Let $T \subset\zn$ be a nonempty set, $c: T\ra \mr$ and $I\subset \zn$ be a nontrivial interval. Assume that $\gm\in (0,1)$ and $\gp \in \jn$ are such that $\inf I/\gm<\sup  I/\gp$ and $\Tgg$ is a dense subset of the interval $\left[\gm, \gp\right]$. Let $\varphi:I\ra \mr$ be a continuous solution of equations \eqref{e2}.\\
\noindent
$(i)$ If $p\neq 0$ then there exist $a,b \in \mr$ such that
\begin{eqnarray}\label{e16}
\varphi(x)=ax^p+b, \qquad x \in \left(\inf I/\gm,\sup I/\gp\right).
\end{eqnarray}
\noindent
$(ii)$ If $p= 0$ then there exist $a,b \in \mr$ such that
\begin{eqnarray}\label{e17}
\varphi(x)=a \log x+b, \qquad x \in \left(\inf I/\gm,\sup I/\gp\right).
\end{eqnarray}
}

\vspace{0,2cm}

We will  generalize this result in few directions. First, the assumption of the density of $\Tgg$ is formulated in a weaker and more universal form. Secondly, we prove that it is sufficient to assume the continuity of $\varphi$ on a non trivial subinterval of $I$. Lastly, we show that the formulas for $\varphi$ are valid in the whole domain $I$, not only locally. All this is obtained with the help of Extension theorem proved in Section 2.

\section{Density of $\Tgg$}
In the paper \cite{JP} many results were  stated under the assumption  that $T\subset \zn$  is an arbitrary nonempty set and  $\gm\in (0,1)$, $\gp \in \jn$ are such that $\Tgg$ is a dense subset of $\left[\gm,\gp\right]$. There we also considered some particular cases, namely when 1 is an accumulation point of $T$, and when $T=\left\{a,b\right\} $, where $\log a/\log b \notin \mq$.  It turns out that in both  these cases the density of $\Tgg$ depends on the ratio  $\gp/\gm$. In this section we prove that this is a general rule. More precisely the following lemma is valid. 

\begin{lem} \label{l1.1}
Let $T\subset \zn$ and let $\gm\in (0,1)$ and $\gp \in \jn$ be numbers such that $\Tgg$ is a dense subset of $\left[\gm,\gp\right]$. Then $\Taa$ is a dense subset of $\left[\alpha_-,\alpha_+\right]$ for every $\alpha_-\in (0,1)$ and $\alpha_+ \in \jn$ such that $\alpha_+/\alpha_->\gp/\gm$.    
\end{lem}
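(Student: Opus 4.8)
The plan is to pass to logarithms and read both sets as connected components of a walk. Writing $S:=\{\log t:t\in T^{\ast}\}$, the set $S$ is symmetric ($S=-S$) and contains $0$, and a number $t$ lies in $T_{\gamma_-,\gamma_+}$ exactly when there is a finite sequence $0=p_0,p_1,\dots,p_n=\log t$ with all increments $p_j-p_{j-1}\in S$ and all $p_j$ in the window $[g_-,g_+]$, where $g_\pm:=\log\gamma_\pm$ (so $g_-<0<g_+$). Because $S=-S$ such walks are reversible, so ``reachable from $0$ inside a window'' is precisely the connected component of $0$ in the graph on that window with edges $x\sim x+s$, $s\in S$. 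Thus the hypothesis says that the component of $0$ inside $[g_-,g_+]$ is dense there, and, setting $a_\pm:=\log\alpha_\pm$, the goal is that the component of $0$ inside $[a_-,a_+]$ is dense in $[a_-,a_+]$. The only structural difference between the windows is their length: the hypothesis $\alpha_+/\alpha_->\gamma_+/\gamma_-$ is exactly $L':=a_+-a_->g_+-g_-=:L$.

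The engine of the argument is a ``shifting block'' construction. Introduce the offset interval $[c_-,c_+]$ with $c_-:=a_--g_-$ and $c_+:=a_+-g_+$; it is nonempty of length $L'-L>0$, and a translate $[\sigma+g_-,\sigma+g_+]$ of the $\gamma$-window lies inside $[a_-,a_+]$ precisely when $\sigma\in[c_-,c_+]$. First I would prove: \emph{if the component of $0$ inside $[a_-,a_+]$ meets $[c_-,c_+]$, then it is dense in $[a_-,a_+]$.} Indeed, from any reachable offset $\sigma\in[c_-,c_+]$ the shifted copy of the dense $\gamma$-component fills $[\sigma+g_-,\sigma+g_+]$ densely and stays inside $[a_-,a_+]$; since $g_-<0<g_+$ this interval is a neighbourhood of $\sigma$, so the set of reachable offsets is relatively open in $[c_-,c_+]$, and a short limiting argument makes it relatively closed as well. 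By connectedness it equals $[c_-,c_+]$, and then $\bigcup_{\sigma\in[c_-,c_+]}[\sigma+g_-,\sigma+g_+]=[c_-+g_-,c_++g_+]=[a_-,a_+]$ gives density.

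This reduces the whole lemma to producing a single reachable point in $[c_-,c_+]$. If $0\in[c_-,c_+]$ — equivalently $[\gamma_-,\gamma_+]\subseteq[\alpha_-,\alpha_+]$ — there is nothing more to do. Moreover the reflection $x\mapsto-x$ is legitimate (as $S=-S$) and sends the pair of windows to $[-g_+,-g_-]$, $[-a_+,-a_-]$ while fixing $0$, so it interchanges the two one-sided cases; hence I may assume $[c_-,c_+]\subset(0,+\infty)$, i.e.\ that I must climb upward from $0$ into $[c_-,c_+]$. Note that the extra length already makes room for this: $c_-<a_+$ because $a_+-c_-=L'+g_-=L'-|g_-|>L-|g_-|=g_+>0$, so the target height sits strictly below the ceiling.

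The main obstacle is exactly this ascent. The natural strategy is greedy: while the current reachable height $x$ is below $c_-$, the head-room $a_+-x$ exceeds $g_+$, so there is always room above to append an admissible upward move that stays inside $[a_-,a_+]$; iterating such moves would surpass $c_-$ and finish. The hard part is guaranteeing that an admissible upward move genuinely exists at every height below $c_-$, with no ``barrier'' stopping the climb. This is delicate for two reasons: the reachable set lives inside the subgroup $\langle S\rangle$, so one cannot simply translate by $c_-$; and a $\gamma$-ascent copied verbatim from $0$ would dip below $a_-$ (recall $g_-<a_-$ in this case), while restricting the $\gamma$-window to its upper part $[a_-,g_+]$ is a \emph{shrinking} of the window, for which density is not given. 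I expect the resolution to exploit the strict slack $L'-L>0$: one runs inside $[a_-,a_+]$ an excursion that uses the spare ceiling above $g_+$ to take a step the $\gamma$-window forbids, thereby bridging $0$ to a point of $[c_-,c_+]$ within $\langle S\rangle$. Making this bridging uniform — so that the greedy ascent never stalls — is the step I expect to require the most care, and it is where the hypothesis $\alpha_+/\alpha_->\gamma_+/\gamma_-$ is used in an essential way.
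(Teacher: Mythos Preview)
Your reduction and ``shifting block'' idea are sound once stated carefully, but the proposal is genuinely incomplete: you never produce the first reachable offset in $[c_-,c_+]$ when $0\notin[c_-,c_+]$, and you explicitly flag this as the step you do not know how to carry out. That step is the whole difficulty. The greedy ascent you describe does not obviously terminate, and the sentence about an ``excursion that uses the spare ceiling above $g_+$'' is a hope, not an argument. (There is also a small slip: the reachable offsets themselves cannot be relatively open in $[c_-,c_+]$, since they lie in the subgroup $\langle S\rangle$; what is open and closed is the \emph{closure} of the reachable set intersected with $[c_-,c_+]$. This is easily repaired and does not affect the reduction.)

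The paper sidesteps the ascent entirely by arguing by contradiction at a gap. If $T_{\alpha_-,\alpha_+}$ is not dense, pick a boundary point $a$ of a maximal gap and a reachable $x_0$ just below $a$. The key construction is a \emph{conjugated walk}: choose a scale $m$ with $x_0\in(m\gamma_-,m\gamma_+)$ and $[m\gamma_-,m\gamma_+]$ contained in $(\alpha_-,\alpha_+)$ with a small multiplicative slack (this is exactly where $\alpha_+/\alpha_->\gamma_+/\gamma_-$ enters), take a $\gamma$-walk $t_1,\dots,t_M$ from $1$ to approximately $x_0/m$, and a short $\gamma$-walk $u_1,\dots,u_N$ whose product is slightly larger than $1$. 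Then the sequence
\[
\bigl(t_M^{-1},\dots,t_1^{-1},\ u_1,\dots,u_N,\ t_1,\dots,t_M\bigr)
\]
applied to $x_0$ stays inside $(\alpha_-,\alpha_+)$ at every partial product (the ``out'' and ``back'' legs live in the translated window $m[\gamma_-,\gamma_+]$, and the slack absorbs the small $u$-shift), while its net effect is multiplication by $u_1\cdots u_N$, landing $x_0$ in the gap --- a contradiction. In your additive picture this is precisely the excursion you were searching for, but notice its shape: one does not climb monotonically; one walks \emph{away} from $x_0$ to a location where an entire translate of the $\gamma$-window fits inside $[a_-,a_+]$, performs the small shift there, and returns. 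That there-and-back idea is the missing ingredient in your outline.
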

\begin{proof}
Suppose to the contrary that $\left[\alpha_-,\alpha_+\right] \setminus \cl \Taa \neq \emptyset$  for some  $\alpha_-\in (0,1)$ and $\alpha_+ \in \jn$ satisfying the inequality $\alpha_+/\alpha_->\gp/\gm$. Then there exist a countable set $\Lambda$  and a family $\left\{I_{\lambda}\right\}_{\lambda \in \Lambda} $ of pairwise disjoint intervals such that
\[
\left(\alpha_-,\alpha_+\right)\setminus \cl \Taa=\bigcup_{\lambda \in \Lambda}I_{\lambda};
\]
their endpoints belong to $\left\{\alpha_-,\alpha_+\right\} \cup \cl \Taa$. As $\Taa\ne \emptyset$, at least one of those endpoints, say $a$, lies in $\left(\alpha_-,\alpha_+\right)$. Assume, for instance, that it is the left endpoint of an interval $I_{\lambda}$. So $ a \in \left(\alpha_-,\alpha_+\right) \cap \cl \Taa$ and we can find a $\kappa \in \jn$ with the following properties:
\begin{subequations}
\begin{alignat}{4}
\label{e3a}
\left(a,\kappa^2 a\right)&\cap \Taa= \emptyset, \\[2mm]
\label{e3b}
\kappa&<\gamma^{-1}_-, \,\, \kappa<\gp ,\\
\label{e3c}
\kappa^4&<\frac{\alpha_+\gm}{\alpha_-\gp},\\
\label{e3d}
\kappa^3&<\frac{\alpha_+}{a},\\
\label{e3e}
\kappa^2&<\frac{a}{\alpha_-}. 
\end{alignat}
\end{subequations}

Since $a \in \cl \Taa$ and \eqref{e3a} holds we can take $x_0\in \left(\kappa^{-1}a,a\right]\cap \Taa$. Then inequalities \eqref{e3d} and \eqref{e3e} imply
$$
\kappa^{-3}\alpha_+>a \ge x_0 \quad \text{ and }\quad \kappa\alpha_-<\kappa^{-1}a <x_0,
$$
respectively, and thus,
\begin{equation}\label{20220118a}
 x_0 \in (\kappa\alpha_-,\kappa^{-3}\alpha_+).
\end{equation}
Using \eqref{e3c}, \eqref{20220118a}, and the inequality $\gm<\gp$ we see that 
$$
\max\Big\{\frac{\kappa\alpha_-}{\gm},\frac{x_0}{\gp}\Big\}<\min\Big\{\frac{\kappa^{-3}\alpha_+}{\gp},\frac{x_0}{\gm}\Big\}.
$$
Take any element $m$ belonging to the open interval ending in these points. Then
$$m\gp>x_0,\quad m\gm<x_0$$
and
$$ \kappa\alpha_-<m\gm<m\gp<\kappa^{-3}\alpha_+.$$
Consequently, we come to 
\begin{subequations}
\begin{alignat}{4}
\label{e4a}
x_0&\in \left(m\gm,m\gp\right),\\
\label{e4b}
\left[m\gm,m\gp\right]&\subset \left(\kappa \alpha_-,\kappa^{-3}\alpha_+\right).
\end{alignat}
\end{subequations}
It follows from \eqref{e4a} that $\left(\kappa^{-1}x_0m^{-1},\kappa x_0 m^{-1}\right)\cap \left(\gm,\gp\right)\supset \{x_0m^{-1}\}\neq \emptyset$. So, since the set $\Tgg$ is dense in $\left[\gm,\gp\right]$, there exist $M\in \n$ and $t_1, \ldots , t_M \in T^{\ast}$ such that
\begin{align}
t_1 \cdot\ldots\cdot t_i&\in \left[\gm,\gp\right], \quad i=1, \ldots, M,\label{e5}\\
t_1 \cdot\ldots\cdot t_M&\in \left(\kappa^{-1}x_0 m^{-1},\kappa x_0 m^{-1}\right).\nonumber
\end{align}
Multiplying $t_1 \cdot\ldots\cdot t_{i-1}$ for $i=2, \ldots M$ by the inverse of $t_1 \cdot\ldots\cdot t_M$ we come to
\begin{subequations}
\begin{alignat}{4}
\label{e6a}
t^{-1}_M \cdot\ldots\cdot t^{-1}_i&\in \left(\kappa^{-1}\gm x^{-1}_0 m,\kappa \gp x^{-1}_0m\right), \quad i=2, \ldots, M,\\
\label{e6b}
t^{-1}_M \cdot\ldots\cdot t^{-1}_1&\in \left(\kappa^{-1}x^{-1}_0 m,\kappa x^{-1}_0 m\right).
\end{alignat}
\end{subequations}

Similarly, in view of the second inequality \eqref{e3b}, there exist $N \in \n$ and $u_1, \ldots, u_N \in T^{\ast}$ such that
\begin{subequations}
\begin{alignat}{4}\label{e7a}
u_1 \cdot\ldots\cdot u_i&\in \left[\gm,\gp\right], \quad i=1, \ldots, N,\\
\label{e7b}
u_1 \cdot\ldots\cdot u_N&\in \left(\kappa,\kappa^2\right).
\end{alignat}
\end{subequations}
Put
\[
l:=\left(t^{-1}_M, t^{-1}_{M-1}, \ldots , t^{-1}_1,u_1,\ldots, u_N, t_1, \ldots, t_M\right).
\]

Now we prove that 
\begin{eqnarray}\label{e8}
x_0l_1 \cdot\ldots\cdot l_i\in \left(\alpha_-,\alpha_+\right), \qquad i=1, \ldots, 2M+N.
\end{eqnarray}
If $i \in \{1, \ldots, M-1\}$ then, by \eqref{e6a} and \eqref{e4b}, we have
\begin{eqnarray*}
x_0l_1 \cdot\ldots\cdot l_i\in m\left(\kappa^{-1}\gm,\kappa\gp\right)\subset \left(\kappa^{-1}\kappa \alpha_-,\kappa\kappa^{-3}\alpha_+\right)\subset \left(\alpha_-,\alpha_+\right).
\end{eqnarray*}
When $i=M$ conditions \eqref{e6b}, \eqref{e3b} and \eqref{e4b} implies
\begin{eqnarray*}
x_0l_1 \cdot\ldots\cdot l_i\in m\left(\kappa^{-1},\kappa\right)\subset m\left(\gm,\gp\right)\subset \left(\kappa\alpha_-,\kappa^{-3}\alpha_+\right)\subset \left(\alpha_-,\alpha_+\right).
\end{eqnarray*}
In the case when $i \in \{M+1, \ldots, M+N\}$, making use of \eqref{e6b}, \eqref{e7a} and \eqref{e4b}, we get
\begin{eqnarray*}
x_0l_1 \cdot\ldots\cdot l_i\in m\left(\kappa^{-1}\gm,\kappa\gp\right)\subset \left(\kappa^{-1}\kappa\alpha_-,\kappa\kappa^{-3}\alpha_+\right)\subset \left(\alpha_-,\alpha_+\right).
\end{eqnarray*}
Finally, if $i \in \{M+N+1, \ldots , 2M+N\}$ then \eqref{e6b}, \eqref{e7b}, \eqref{e5} and \eqref{e4b} force 
\begin{eqnarray*}
x_0l_1 \cdot\ldots\cdot l_i\in m\left(\kappa^{-1}\kappa\gm,\kappa \kappa^2 \gp\right)= m\left(\gm,\kappa^3\gp\right)\subset \left(\kappa\alpha_-,\kappa^3\kappa^{-3}\alpha_+\right)\subset \left(\alpha_-,\alpha_+\right)
\end{eqnarray*}
which completes the proof of \eqref{e8}. \\

Since $x_0\in \Taa$ and $l_1, \ldots, l_{2M+N}\in T^{\ast}$ it follows from \eqref{e8} that
\[
x_0l_1 \cdot\ldots\cdot l_{2M+N}\in \Taa.
\]
On the other hand, using the definition of $l$, the relation $x_0\in \left(\kappa^{-1}a,a\right]$ and \eqref{e7b}, we get
\begin{eqnarray*}
x_0l_1 \cdot\ldots\cdot l_{2M+N}= x_0u_1\cdot\ldots\cdot u_N \in \left(\kappa^{-1}a\kappa,a \kappa^2\right)=\left(a,\kappa^2a\right).
\end{eqnarray*}
This implies that $x_0l_1 \cdot\ldots\cdot l_{2M+N} \in\left(a,\kappa^2a\right)\cap\Taa$ contrary to \eqref{e3a}.
\end{proof} 

Using the above lemma we conclude that for every nonempty $T\subset(0,+\infty)$ there exists a limit ratio $\gp/\gm$ such that $\Tgg$ is dense in $\left[\gm,\gp\right]$. This is in fact one of the most important result of this note.

\begin{thm}[Limit ratio principle]\label{t1.2} 
For every $T\subset \zn$ the formula 
\begin{eqnarray*}
R_T=\inf \left\{\frac{\gp}{\gm}: \gm \in (0,1), \gp \in \jn\mbox{ and } \Tgg \mbox{ is dense in } \left[\gm,\gp\right] \right\}
\end{eqnarray*}
defines a unique $R_T \in [1,+\infty]$ such that\\
\noindent
$(i)$ for all $\gm \in (0,1)$ and $\gp \in \jn$ with $\gp/\gm>R_T$ the set $\Tgg$ is dense in $\left[\gm,\gp\right]$,\\
\noindent
$(ii)$ for all $\gm \in (0,1)$ and $\gp \in \jn$ with $\gp/\gm<R_T$ the set $\Tgg$ is not dense in $\left[\gm,\gp\right]$.
\end{thm}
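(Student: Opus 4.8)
The plan is to read off all three assertions---membership of $R_T$ in $[1,+\infty]$, property $(i)$, and property $(ii)$---almost directly from Lemma \ref{l1.1}, since all the combinatorial work has already been done there. I would start by abbreviating
\[
S:=\left\{\frac{\gp}{\gm}: \gm \in (0,1),\ \gp \in \jn\text{ and }\Tgg\text{ is dense in }\left[\gm,\gp\right]\right\},
\]
so that $R_T=\inf S$, adopting the usual convention $\inf \emptyset=+\infty$ to cover the degenerate case in which no admissible pair makes $\Tgg$ dense. First I would verify that $R_T\in[1,+\infty]$: for every admissible pair one has $\gm<1<\gp$, hence $\gp/\gm>\gp>1$, so $S\subset\jn$ and therefore $\inf S\ge 1$ (vacuously true also when $S=\emptyset$). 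Thus $R_T$ is a well-defined element of $[1,+\infty]$.

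Property $(ii)$ is then immediate and requires no use of the lemma. If $\gm\in(0,1)$ and $\gp\in\jn$ satisfy $\gp/\gm<R_T$ but, contrary to the claim, $\Tgg$ were dense in $\left[\gm,\gp\right]$, then by definition $\gp/\gm\in S$, forcing $R_T=\inf S\le \gp/\gm$ and contradicting $\gp/\gm<R_T$. For property $(i)$ I would invoke Lemma \ref{l1.1}. Fix $\gm\in(0,1)$, $\gp\in\jn$ with $\gp/\gm>R_T=\inf S$. By the definition of infimum there is an element of $S$ strictly below $\gp/\gm$, that is, a pair $\beta_-\in(0,1)$, $\beta_+\in\jn$ such that $T_{\beta_-,\beta_+}$ is dense in $\left[\beta_-,\beta_+\right]$ and $\beta_+/\beta_-<\gp/\gm$. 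Applying Lemma \ref{l1.1} with its $(\gm,\gp)$ replaced by $(\beta_-,\beta_+)$ and its $(\alpha_-,\alpha_+)$ replaced by $(\gm,\gp)$---so that the lemma's hypothesis $\alpha_+/\alpha_->\gp/\gm$ becomes exactly the inequality $\gp/\gm>\beta_+/\beta_-$, which holds---yields that $\Tgg$ is dense in $\left[\gm,\gp\right]$, as required.

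For the uniqueness clause I would observe that any $R\in[1,+\infty]$ satisfying both $(i)$ and $(ii)$ must coincide with $R_T$. Indeed, were there two such numbers $R<R'$, then for any $v\in(R,R')$ (necessarily $v>1$ since $R\ge 1$) the pair $\gm:=v^{-1/2}$, $\gp:=v^{1/2}$ is admissible with $\gp/\gm=v$; property $(i)$ applied to $R$ would declare $\Tgg$ dense while property $(ii)$ applied to $R'$ would declare it not dense, a contradiction. Hence $R_T$ is the unique number with the two properties.

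I do not expect any step here to present a genuine obstacle, because the substantive content---propagation of density to pairs of strictly larger ratio---is precisely Lemma \ref{l1.1}. The only points demanding care are the empty-set convention (when $\Tgg$ is never dense, $R_T=+\infty$, whereupon $(i)$ holds vacuously and $(ii)$ holds for every finite ratio) and the use of the \emph{strict} inequality $\beta_+/\beta_-<\gp/\gm$ produced by the infimum in $(i)$, which is exactly what allows the lemma's strict hypothesis $\alpha_+/\alpha_->\gp/\gm$ to be met.
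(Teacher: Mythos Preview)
Your proof is correct and follows exactly the approach the paper intends: the paper presents Theorem~\ref{t1.2} as an immediate consequence of Lemma~\ref{l1.1} (``Using the above lemma we conclude that\ldots'') without giving a separate argument, and what you have written is precisely the routine unpacking of that conclusion---the infimum definition yields $(ii)$ directly, while Lemma~\ref{l1.1} upgrades any witnessing pair of smaller ratio to give $(i)$.
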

\noindent
The value $R_T$ is called the {\it limit ratio of $T$}.\\

The remaining case $\gp/\gm=R_T$ is not covered by the present note. In this special case it is not known if the situation depends on the ratio only. Moreover, the gap between necessary and sufficient conditions motivates us to distinguish the assumption $\gp/\gm>R_T$ and the density of $\Tgg$. It turns out that  the first one is suitable in most cases. 

In the rest of this section we deliver several properties of the limit ratio. Below we adopt the convention that $\inf \emptyset=+\infty$.

\begin{prop}
 The limit ratio operator $R$ is decreasing in the inclusion ordering, that is $R_{T}\geq R_{V}$ for all $T$, $V$ with $T\subset V\subset \zn$.
\end{prop}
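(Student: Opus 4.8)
The plan is to exploit the monotonicity of the construction $T\mapsto \Tgg$ under set inclusion and then transport it to the infimum defining $R_T$. First I would note that $T\subset V$ forces $T^{\ast}\subset V^{\ast}$: indeed $T^{-1}\subset V^{-1}$ while the factor $\{1\}$ is common, whence $T^{\ast}=T\cup T^{-1}\cup\{1\}\subset V\cup V^{-1}\cup\{1\}=V^{\ast}$.

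Next I would feed this containment into the defining formula for $\Tggn$. Every admissible factorisation $t=t_1\cdot\ldots\cdot t_n$ with $t_1,\ldots,t_n\in T^{\ast}$ whose partial products $t_1\cdot\ldots\cdot t_k$ all lie in $[\gm,\gp]$ is, a fortiori, such a factorisation with factors taken from $V^{\ast}$. Hence $\Tggn\subset V_{\gm,\gp,n}$ for every $n\in\n$, and taking the union over $n$ yields $\Tgg\subset V_{\gm,\gp}$ for all $\gm\in(0,1)$ and $\gp\in\jn$.

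The third step uses the elementary topological fact that a superset of a set dense in $[\gm,\gp]$ is itself dense in $[\gm,\gp]$. Consequently, whenever $\Tgg$ is dense in $[\gm,\gp]$ so is $V_{\gm,\gp}$; equivalently, the family of pairs $(\gm,\gp)$ admitted in the infimum defining $R_T$ is contained in the corresponding family for $R_V$. Passing to the infimum of $\gp/\gm$, an infimum over a larger index set cannot increase, so $R_V\le R_T$, which is exactly the asserted inequality $R_T\ge R_V$. With the convention $\inf\emptyset=+\infty$ this argument also covers the degenerate case in which no pair $(\gm,\gp)$ makes $\Tgg$ dense, for then $R_T=+\infty$ dominates every element of $[1,+\infty]$.

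There is essentially no serious obstacle in this argument; it is a pure monotonicity statement. The only points demanding a moment of care are the correct direction of the density implication, namely that inclusion preserves density rather than reversing it, and the fact that enlarging the index set lowers rather than raises the infimum, which is what turns the containment $\Tgg\subset V_{\gm,\gp}$ into $R_T\ge R_V$.
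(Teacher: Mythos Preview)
Your proof is correct and follows essentially the same route as the paper: establish $T_{\gm,\gp,n}\subset V_{\gm,\gp,n}$ from $T^{\ast}\subset V^{\ast}$, pass to the union to get $\Tgg\subset V_{\gm,\gp}$, and then use that density is preserved under passing to supersets. The only cosmetic difference is that you conclude by comparing the infima defining $R_T$ and $R_V$ directly, whereas the paper phrases the last step as an appeal to the limit ratio principle; these amount to the same thing.
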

\begin{proof}
Fix $T$ and $V$ as above. Then, direcly from the definition, we know that $\Tggn \subset V_{\gm,\gp,n}$ for all $\gm \in (0,1)$, $\gp \in (1,+\infty)$ and $n \in \mathbb{N}$. Therefore 
$\Tgg \subseteq V_{\gm,\gp}$ for all $\gm \in (0,1)$ and $\gp \in (1,+\infty)$. 

In particular, if $\Tgg$ is dense in $[\gm,\gp]$ for some $\gm,\ \gp$ as above, then so is $V_{\gm,\gp}$. Thus by the limit ratio principle we immediately obtain $R_T \ge R_V$.
\end{proof}

\begin{prop}\label{p1.3}
Given a nonempty set $T \subset \zn$ we have what follows:
\begin{enumerate}[(i)]
\item \label{R.7} $R_T\geq \inf\left(T^{\ast}\cap \jn\right)$;
\item \label{R.6} $R_{T^{\ast}}= R_{T}$;
\item \label{R.1} if $T=\{a,b\}$ for some $a \in (0,1)$, and $b \in \jn$ with $\log a/\log b\notin \mq$, then $R_T=b/a$;
\item \label{R.2} $R_T= 1$ if and only if $1$ is the accumulation point of $T$;
\item \label{R.8} if $T^*$ has an accumulation point $c \in [1,+\infty)$, then $R_T \leq c^2$.
\end{enumerate}
\end{prop}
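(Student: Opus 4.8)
The plan is to make part (i) the backbone: from it parts (iv) and (v) follow quickly, part (ii) is essentially formal, and only part (iii) requires genuine work. For (i) set $c_0:=\inf(T^{\ast}\cap\jn)$ and pass to logarithms. If $\gp/\gm<c_0$, the window $[\log\gm,\log\gp]$ has length $\ell=\log(\gp/\gm)<\log c_0$. Any factor $t_k\in T^{\ast}$ with $t_k\neq 1$ has $|\log t_k|\ge\log c_0$ (if $t_k>1$ then $t_k\ge c_0$; if $t_k<1$ then $t_k^{-1}\in T^{\ast}\cap\jn$, so again $t_k^{-1}\ge c_0$). Since consecutive partial products of an admissible sequence differ by exactly $\log t_k$ and both lie in a window of length $\ell<\log c_0$, no nonzero factor can ever occur; hence every admissible product equals $1$, $\Tgg=\{1\}$ is not dense, and the Limit Ratio Principle forces $R_T\ge c_0$. (When $T^{\ast}=\{1\}$ this reads $R_T\ge+\infty$, consistent with $\inf\emptyset=+\infty$.)

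Part (ii) is immediate from $(T^{\ast})^{\ast}=T^{\ast}$, which holds because $T^{\ast}$ is already closed under inversion and contains $1$. Consequently the admissible factors in the definitions of $T_{\gm,\gp,n}$ and of $(T^{\ast})_{\gm,\gp,n}$ range over the same set, the two families coincide for all parameters, and $R_{T^{\ast}}=R_T$ follows directly from the definition of the limit ratio.

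For (v), let $c\in[1,+\infty)$ be an accumulation point of $T^{\ast}$ and fix $\gm,\gp$ with $\gp/\gm>c^2$, i.e.\ $\ell>2\log c=:2\gamma$ in logarithms. Pick $s_n\in T^{\ast}$ with $s_n\to c$, $s_n\neq c$; then $\log s_n\to\gamma$, and the net displacements $\log s_n-\log s_{n'}$ are arbitrarily small and of either sign. Each is realised by an ``up-excursion'' $p\mapsto p+\log s_n\mapsto p+\log s_n-\log s_{n'}$ whose intermediate point sits near $p+\gamma$, so it stays inside the window as long as $p\lesssim\log\gp-\gamma$; symmetrically, using $s_n^{-1}\in T^{\ast}$ gives ``down-excursions'' valid for $p\gtrsim\log\gm+\gamma$. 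Because $\ell>2\gamma$ these two ranges overlap and cover the whole window, so from every point one can make arbitrarily small admissible moves of both signs; starting from $\log 1=0$ one then reaches a dense set, whence $\Tgg$ is dense and $R_T\le\gp/\gm$. Letting $\gp/\gm\downarrow c^2$ yields $R_T\le c^2$. Specialising to $c=1$ gives the forward implication of (iv), and the converse of (iv) is the contrapositive of (i): if $1$ is not an accumulation point of $T$ (equivalently of $T^{\ast}$), then $c_0>1$, so $R_T\ge c_0>1$.

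Part (iii) is where the main obstacle lies. Here $T^{\ast}=\{a,b,a^{-1},b^{-1},1\}$, so (i) only yields the weak bound $R_T\ge\min(b,1/a)$ and a separate argument is needed to reach the exact value $b/a$. In logarithms the admissible products lie in the dense subgroup $\alpha\mz+\beta\mz$ with $\alpha:=-\log a$, $\beta:=\log b$ and $\alpha/\beta\notin\mq$, and the confinement to a window of length $\ell$ is exactly an irrational-rotation model on a circle of circumference $\alpha+\beta$. For $\gp/\gm>b/a$, i.e.\ $\ell>\alpha+\beta$, the window exceeds one full period and equidistribution of the irrational rotation produces a dense orbit, giving $R_T\le b/a$; for $\gp/\gm<b/a$ the orbit cannot fill the window, density fails, and $R_T\ge b/a$. \textbf{The hard part} is precisely this sharp two-generator, Diophantine analysis, which does not reduce to the one-step ``no-escape'' estimate behind (i); I would carry it out through the rotation/continued-fraction picture above (the very case singled out in \cite{JP}) and combine it with the Limit Ratio Principle to conclude $R_T=b/a$.
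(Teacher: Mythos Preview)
Your arguments for (i) and (ii) coincide with the paper's. For (iii) the paper does not reprove anything: it simply cites \cite[Proposition~2]{JP}, and your irrational-rotation sketch is precisely the content of that reference, so there is no discrepancy. In (iv) your labels are swapped (specialising (v) to $c=1$ yields the implication ``$1$ accumulation point $\Rightarrow R_T=1$'', i.e.\ the \emph{converse} of (iv), while the contrapositive of (i) gives the forward direction), but the mathematics is correct; the paper instead proves the converse directly by finding $t\in T^{\ast}\cap(1,b/a)$ and exhibiting $t^N\in\Tgg\cap(a,b)$, whereas your derivation from (v) is more economical once (v) is in hand.

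The genuine divergence is in (v). The paper argues by contradiction: assuming $\Tgg$ is not dense for some $\gp/\gm>c^2$, it runs a four-case analysis on the shape of $\cl\Tgg$ (a left interval, a right interval, a proper subinterval, or a set with a gap) and in each case produces an element of $\Tgg$ where there should be none, using one or two elements of $T^{\ast}$ near $c$. Your excursion argument is a direct construction and is conceptually cleaner: it explains \emph{why} the threshold is $c^2$ (two passes through height $\gamma=\log c$ fit inside a window of length $\ell>2\gamma$). The one step that deserves tightening is the last clause, ``starting from $\log 1=0$ one then reaches a dense set'': the displacement set $D=\{\log s_n-\log s_{n'}\}$ is only countable, so you should note that a \emph{single} small $d\in D\cap(0,\varepsilon)$ already makes $\{kd:k\in\mz\}$ an $\varepsilon$-net, and then walk monotonically from $0$ toward the target, switching from up- to down-excursions (or vice versa) as the current position crosses the overlap band $[\log\gm+\gamma,\log\gp-\gamma]$. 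With that remark your proof of (v) is complete and arguably more transparent than the paper's case analysis, at the cost of being less self-contained at each step.
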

\begin{proof} 
If $T=\{1\}$ then all these properties are trivial. Thus hereafter we assume that $T$ contains at least one element different than $1$.

\eqref{R.7} Clearly $T^* \cap (1,+\infty) \ne \emptyset$. Suppose that $R_T<\inf(T^*\cap(1,+\infty))$ and let $\gm\in(0,1)$ and $\gp \in (1,+\infty)$ be such that 
 $$\frac{\gp}{\gm} \in \Big( R_T, \inf\big(T^*\cap(1,+\infty)\big)\Big).$$
Then $\Tgg$ is dense in $[\gm,\gp]$. Take any $t \in \Tgg \cap (1,+\infty)$ and choose $n \in \n$ and $t_1,\ldots,t_n \in T^*$ satisfying the conditions $t=t_1\cdot\ldots\cdot t_n$ and $\gm \le t_1\cdot\ldots\cdot t_k \le \gp$ for all $k =1,\ldots,n$. Since $t>1$ then $t_k>1$ for at least one $k=1,\dots,n$. If $k=1$ then $t_1 \in T^* \cap (1,+\infty)$ hence 
$$t_1>\frac{\gp}{\gm}>\gp$$
which is impossible. If $2\le k\le n$ then $t_k \in T^*\cap(1,+\infty)$, so 
$$t_1\cdot\ldots\cdot t_k=(t_1\cdot\ldots\cdot t_{k-1})\cdot t_k>\gm \frac{\gp}{\gm}=\gp,$$
that is false again. Consequently, the assertion \eqref{R.7} follows.

To notice \eqref{R.6} observe that $(T^*)^*=T^*$ hence $T^*_{\gm,\gp}=\Tgg$ and finally $R_{T^*}=R_T$. Assertion \eqref{R.1} immediately follows from \cite[Proposition~2]{JP}.

Assuming $R_T=1$ and using assertion \eqref{R.7} we see that $\inf\left(T^{\ast}\cap \jn\right)=1$ which implies that $1$ is an accumulation point of $T^*$. Taking any sequence $(u_k)_{k \in \mathbb{N}}$ of elements in $T^* \setminus \{1\}$ converging to $1$ and putting
$$ v_k=\begin{cases}
        u_k, &\text{ if }u_k \in T,\\
        1/u_k, &\text{ if }u_k \in T^{-1},
       \end{cases}
$$
we get a sequence $(v_k)_{k \in \n}$ of points in $T \setminus\{1\}$, tending to $1$. Consequently, $1$ is an accumulation point of the set $T$. 

In order to prove the converse implication assume that $1$ is an accumulation point of $T$. To show the equality $R_T=1$ it is enough to check that the set $\Tgg$ is dense in $[\gm,\gp]$ for arbitrary $\gm\in(0,1)$ and $\gp \in(1,+\infty)$. Fix such $\gm,\gp$ and take $a,b \in \mathbb{R}$ satisfying $\gm\le a<b \le \gp$. We prove that $\Tgg \cap (a,b)\ne \emptyset$. Since $1 \in \Tgg$ and $(T^*)^{-1}=T^*$ we may assume that $1\le a < b$. Moreover, as $1$ is also an accumulation point of $T^*$, the equality $(T^*)^{-1}=T^*$ implies that $1$ is an accumulation point of $T^*\cap(1,\gp)$, and thus we can find $t \in T^* \cap(1, b/a)$. Let $N\in \mathbb{N}$ be the maximal positive integer such that $t^N<b$.
Then
$$1<t<t^2<\ldots<t^N<b \le \gp,$$
hence $t^N \in T_{\gm,\gp,N}$. On the other hand $t^N=\frac{t^{N+1}}t>\frac{b}{t}>a$. Consequently, $t^N \in \Tgg \cap (a,b)$. Thus we have proved \eqref{R.2}.

Finally we proceed to prove \eqref{R.8}. In the case $c=1$ it is a direct consequence of \eqref{R.2}. From now on assume that $T^*$ has an accumulation point $c \in(1,+\infty)$ and suppose to the contrary that there exists $\gm\in(0,1)$ and $\gp \in(1,+\infty)$ such that $\frac{\gp}{\gm}>c^2$ and $\Tgg$ is not a dense subset of $[\gm\gp]$. Then, since $1 \in \Tgg$, we have four cases:
\begin{enumerate}
 \item $\cl \Tgg=[\gm,\delta]$ for some $\delta \in[1,\gp)$;
 \item $\cl \Tgg=[\delta,\gp]$ for some $\delta \in(\gm,1]$;
 \item $\cl \Tgg=[a,b]$ for some $a \in (\gm,1]$ and $b \in [1,\gp)$;
 \item $\cl \Tgg$ is not an interval;
\end{enumerate}

At first consider the case when $\cl \Tgg=[\gm,\delta]$ for some $\delta \in[1,\gp)$. Since $c$ is an accumulation point of $T^*$ and $1<c<\frac\gp\gm$ we can find an element $t \in T^* \cap (1,\frac{\gp}{\gm})$.
Then $\frac \delta t < \delta$ and $\gm< \frac{\gp}t$. In particular, the intervals $( \gm,\delta)$ and $(\frac{\delta}t,\frac{\gp}t)$ have a nonempty intersection. Since $\Tgg$ is a dense subset of $[\gm,\delta]$, there exists an element $v \in \Tgg \cap (\frac\delta t,\frac{\gp}t)$. 
Next, since $t \in T^*$, $v \in \Tgg$, and $tv \in (\delta,\gp) \subset (\gm,\gp)$, we have $tv \in \Tgg$. But it is a contradiction since $tv>\delta$ and $\cl \Tgg=[\gm,\delta]$. Analogously we can exclude the case $\cl \Tgg=[\delta,\gp]$ for some $\delta \in(\gm,1]$. 

In the third case we have 
$\frac{\gp}{\gm}\frac ba > c^2$, and hence either $\frac{\gp}a>c$ or $\frac{b}{\gm}>c$. 
In the first subcase there exists $t \in T^* \cap (1,\frac{\gp}a)$. Using the definition of $\Tgg$ one can verify that $t\Tgg \cap [\gm,\gp]\subset \Tgg$, and thus 
$$\cl T_{\gm,\gp} \supset \cl t T_{\gm,\gp} \cap [\gm,\gp]=[ta,tb]\cap[\gm,\gp].$$
But, since $ta <\gp$ and $tb>1>\gm$ we have $[ta,tb]\cap(\gm,\gp) \ne \emptyset$, and thus
$\sup T_{\gm,\gp} \ge \min\{tb,\gp\}>b$,
which is a contradiction. The prove in the second subcase in analogous.

The only remaining case is that $\Tgg$ has a gap (case 4). More precisely there exists $a,b \in (\gm,\gp)$ with $a<b$ such that the interval $(a,b)$ is disjoint with $\Tgg$, whenever both sets $(\gm,a]\cap \Tgg$ and $[b,\gp)\cap \Tgg$ are nonempty. Let us assume that $(a,b)$ is a maximal open interval with this property. Then we have $\frac{ac}{bc^{-1}}=\frac ab c^2< c^2 < \frac{\gp}{\gm}$, in particular either $ac<\gp$, or $bc^{-1}>\gm$. This splits the proof of this case into two subcases (with the same splitting condition as in case~3).

At first let us assume that $ac<\gp$. Since $c \in (1,\frac{\gp}a)$ is an accumulation point of $T^*$, the set 
$$\Gamma:=T^* \cap \Big(1,\frac \gp a\Big) \cap \Big(c \sqrt{\frac ab},c \sqrt{\frac ba}\Big)$$ is infinite. Take two elements $c_1,c_2 \in \Gamma$ with $c_1<c_2$. 

Since $(a,b)$ is a maximal interval disjoint with $\Tgg$ and $(\gm,a]\cap \Tgg$ is nonempty, there exists an element $u \in \Tgg \cap (a\frac{c_1}{c_2},a]$. We are going to show that $uc_2c_1^{-1} \in \Tgg$. 
First observe that $(a,ac_2]\subset(a,\gp)\subset(\gm,\gp)$. Thus 
\begin{align*}
 uc_2 &\in (ac_1,ac_2] \subset (a,ac_2] \subset (\gm,\gp),\\
 uc_2c_1^{-1} &\in (a,a\tfrac{c_2}{c_1}] \subset (a,ac_2] \subset (\gm,\gp),
\end{align*}
and therefore, as $u \in \Tgg$, we get  $uc_2c_1^{-1} \in \Tgg$. 

Moreover, since $c_1,c_2$ belong to $\Gamma$ we get
$$a\frac{c_2}{c_1}<a\frac{c\:\sqrt{\frac ba}}{c\:\sqrt{\frac ab}}=b.$$
Finally we get $uc_2c_1^{-1} \in (a,a\frac{c_2}{c_1}] \subset (a,b)$, which contradicts the choice of $a,b$.

In the second subcase, that is if $bc^{-1}>\gm$, the corresponding set $\Gamma$ is defined as
$T^* \cap \big(\frac \gm b,1\big) \cap \big(c^{-1} \sqrt{\frac ab},c^{-1} \sqrt{\frac ba}\big)$, we take elements $c_1,c_2 \in \Gamma$ with $c_2<c_1$, and $u \in \Tgg \cap (b,b\frac{c_1}{c_2})$. Then we come to a contradiction showing again that $u c_2c_1^{-1} \in \Tgg \cap(a,b)$.
\end{proof}

\begin{rem}
 The number $c^2$ appearing on the right hand side of the property~\eqref{R.8} cannot be diminished. Indeed, for $c \in \jn$,  $T=\{c\cdot2^{1/k} \colon k \in \mathbb{N}_+\}$ and $\gp \in (c,c^2)$ we obtain 
$$ \Big(\frac{\gp}c,c\Big) \cap [t,t\gp) =\emptyset \text{ for all }t \in T^*,$$
which implies $(\tfrac{\gp}c,c) \cap T_{1,\gp} =\emptyset$. Thus $R_T = c^2$.
\end{rem}

Now we are in the position to characterize the finiteness of the limit ratio. In fact we present a necessary and sufficient condition to the equality $R_T=+\infty$, as it is easier to formulate.

\begin{prop}\label{p1.i} Let $T \subset \zn$. Then $R_T=+\infty$ if and only if there exists $t \in \zn$ such that
    $T$ is a discrete subset of $\{t^q \colon q \in \mq\}$. 
\end{prop}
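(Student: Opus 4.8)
The plan is to prove both implications by passing, at least conceptually, to the additive picture under $x\mapsto\log x$: a product $t_1\cdot\ldots\cdot t_n$ becomes a sum, the constraint $\gm\le t_1\cdot\ldots\cdot t_k\le\gp$ becomes a \emph{bounded partial sum} condition, and the scale $\{t^q:q\in\mq\}$ becomes the one-dimensional rational subspace $(\log t)\mq$. I read the phrase ''$T$ is a discrete subset of $\{t^q:q\in\mq\}$'' as ''$T\subset\{t^q:q\in\mq\}$ for some $t\in\zn$ and $T$ has no accumulation point in $\zn$''; both features will be seen to be forced by, and to force, $R_T=+\infty$.

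\textbf{Necessity.} Assume $R_T=+\infty$. First I would show that $T^{\ast}$ (hence $T$) has no accumulation point in $\zn$: if $c\in\zn$ were one, then, after replacing $c$ by $c^{-1}$ if necessary (legitimate since $(T^{\ast})^{-1}=T^{\ast}$) we may assume $c\in[1,+\infty)$, and Proposition~\ref{p1.3}(v) yields $R_T\le c^2<+\infty$, a contradiction. Next I would show $T$ lies on a single rational scale. If it did not, then not all pairs from $T$ would have rational log-ratio, so there would be $s_1,s_2\in T\setminus\{1\}$ with $\log s_1/\log s_2\notin\mq$; from the four numbers $s_1^{\pm1},s_2^{\pm1}$ one selects $a<1<b$ with $\log a/\log b\notin\mq$ and $\{a,b\}\subset\{s_1,s_2\}^{\ast}$. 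Combining the monotonicity of $R$ under inclusion (proved above) with Proposition~\ref{p1.3}(ii) and~(iii) then gives
\[
R_T\le R_{\{s_1,s_2\}}=R_{\{s_1,s_2\}^{\ast}}\le R_{\{a,b\}}=\frac{b}{a}<+\infty,
\]
again a contradiction. Hence $T$ has no accumulation point in $\zn$ and $T\subset\{s^q:q\in\mq\}$ for any fixed $s\in T\setminus\{1\}$, the case $T\subset\{1\}$ being immediate.

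\textbf{Sufficiency.} Assume $T\subset\{t^q:q\in\mq\}$ has no accumulation point in $\zn$; I may take $t>1$. Fix $\gm\in(0,1)$ and $\gp\in\jn$; it suffices to show $\Tgg$ is not dense in $[\gm,\gp]$. The key observation is that only \emph{finitely many} factors can ever be used: if $t_1\cdot\ldots\cdot t_n\in\Tggn$ with all partial products in $[\gm,\gp]$, then each factor satisfies $t_k=(t_1\cdot\ldots\cdot t_k)(t_1\cdot\ldots\cdot t_{k-1})^{-1}\in[\gm/\gp,\gp/\gm]$, so every admissible factor lies in $T^{\ast}\cap[\gm/\gp,\gp/\gm]$, which is finite because $T^{\ast}$ is closed with no accumulation point in $\zn$. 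Writing these finitely many factors as $t^{p_i/q_i}$ and letting $Q$ be a common denominator of the exponents, every product of such factors is an integer power of $t^{1/Q}$, whence
\[
\Tgg\subset\big\{(t^{1/Q})^m:m\in\mz\big\}\cap[\gm,\gp],
\]
a finite set, which is therefore not dense in $[\gm,\gp]$. As $\gm,\gp$ were arbitrary, no admissible pair makes $\Tgg$ dense, that is $R_T=+\infty$.

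\textbf{Main obstacle.} The crux is the finiteness step in the sufficiency part: one must notice that the bounded-partial-product constraint confines every \emph{individual} factor to the fixed compact set $[\gm/\gp,\gp/\gm]$, so that discreteness of $T^{\ast}$ makes the pool of usable factors finite; the rationality of the exponents then collapses all reachable points into a single geometric progression $\{(t^{1/Q})^m\}$. The necessity part is comparatively routine once Proposition~\ref{p1.3}(v) (for discreteness) and Proposition~\ref{p1.3}(ii),(iii) together with inclusion monotonicity (for the single-scale condition) are invoked.
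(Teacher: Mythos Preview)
Your proposal is correct and follows essentially the same route as the paper. For necessity both you and the paper use Proposition~\ref{p1.3}\eqref{R.8} to rule out accumulation points and Proposition~\ref{p1.3}\eqref{R.1} (plus monotonicity) to force all log-ratios to be rational; you are merely more explicit about selecting $a\in(0,1)$, $b\in(1,+\infty)$ from $\{s_1^{\pm1},s_2^{\pm1}\}$, which the paper leaves implicit. For sufficiency both arguments restrict to a bounded window, use discreteness to get finitely many admissible factors, take a common denominator, and conclude that $\Tgg$ sits inside a single geometric progression $\{(t^{1/Q})^m:m\in\mz\}$; your observation that each individual factor lies in $[\gm/\gp,\gp/\gm]$ is in fact slightly sharper than the paper's version, which works with the symmetric window $[\vartheta^{-1},\vartheta]$ and tacitly uses the analogous bound.
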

\begin{proof}
Assume first that $R_T=+\infty$. Then, by Proposition~\ref{p1.3}.\eqref{R.8}, the set $T^*$ has no accumulation points. Next, take any $t_0 \in T^* \setminus\{1\}$. Then, by Proposition~\ref{p1.3}.\eqref{R.1}, since $R_T=+\infty$, we get $\frac{\log t}{\log t_0} \in \mq$ for all $t \in T$. Consequently $T \subset \{t_0^q\colon q \in \mq\}$.

To prove the converse implication assume that $T$ is a discrete subset of $\{t^q \colon q \in \mq\}$ for some $t>0$ and let $\vartheta>1$. Then the set $T \cap [\vartheta^{-1},\vartheta]$ is a finite subset of $\{t^q \colon q \in \mq\}$,
say $T\cap [\vartheta^{-1},\vartheta]=\{t^{\frac{p_1}r},\dots,t^{\frac{p_n}r}\}$ where $p_1,\dots,p_n \in \mathbb{Z}$ and $r \in \mathbb{N}$. Then, defining $t_\vartheta:=t^{\frac 1r}$,  we have
$$(\vartheta^{-1}, \vartheta) \cap T^* \subset \big\{t_\vartheta^n \colon n \in \mathbb{Z}\big\}.$$
Therefore $T_{\vartheta^{-1}, \vartheta} \subset \{t_\vartheta^n \colon n \in \mathbb{Z}\}$ is not a dense set, which by limit ratio principle implies $R_T \ge \vartheta^2$.
As $\vartheta$ is an arbitrary number in $\jn$, we obtain $R_T=+\infty$.
\end{proof}

\section{Extension procedure} 
In what follows the phrase {\it nontrivial interval} means {interval with different endpoints}. Let us also remind that we adopt the convention that $a/0=+\infty$ for all $a \in (0,+\infty]$. Moreover, below $e^n$ denotes the $n$-th iterate of the function $e$.

\begin{lem}[Extension lemma]\label{l2.1} 
Let $T\subset \zn$ be a nonempty set and $I\subset \zn$ be an interval with $\sup I/\inf I>R_T$.
Then, for every nontrivial interval $U\subset I$, 
\begin{eqnarray}\label{e12}
I=\bigcup^{\infty}_{n=0} e^n(U)
\end{eqnarray}
where $e$ is the selfmapping of $2^I$ given by
\begin{eqnarray}\label{e14}
e(A)=\left(T^{\ast}\cdot A\right)\cap I.
\end{eqnarray}
\end{lem}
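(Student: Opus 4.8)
The plan is to prove the nontrivial inclusion $I\subseteq\bigcup_{n=0}^{\infty}e^n(U)$, the reverse one being clear from \eqref{e14}. Since $1\in T^{\ast}$ we have $A\subseteq e(A)$ for every $A\subseteq I$, so the sets $e^n(U)$ increase with $n$ and it suffices to show that their union $W:=\bigcup_{n\ge0}e^n(U)$ equals $I$. Two reductions come first. As $e$ is monotone ($A\subseteq B$ implies $e(A)\subseteq e(B)$), replacing $U$ by its interior only shrinks $W$, so I may assume $U$ open, hence $U\subseteq\w I$, where I write $m:=\inf I$, $M:=\sup I$ and $\w I=(m,M)$. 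It is moreover convenient to confine all trajectories to the interior: put $\tilde e(A):=(T^{\ast}\cdot A)\cap\w I$ and $\tilde W:=\bigcup_{n\ge0}\tilde e^n(U)\subseteq W$. Unwinding the definition, $x\in\tilde e^n(U)$ means $x=u\,t_1\cdots t_n$ for some $u\in U$ and $t_1,\dots,t_n\in T^{\ast}$ with $u\,t_1\cdots t_k\in\w I$ for all $k$; since these constraints are now open, for fixed $(t_i)$ the admissible $u$ form an open set, so each $\tilde e^n(U)$ — and hence $\tilde W$ — is open. If I show $\tilde W=\w I$, then $W\supseteq\w I$, and it will remain only to recover the endpoints of $I$ that lie in $I$.

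I would obtain $\tilde W=\w I$ by a connectedness argument: $\tilde W$ is nonempty (it contains $U$) and open in the connected set $\w I$, so I only have to check that it is also relatively closed there. Fix therefore $x\in\w I$ that is a limit of points of $\tilde W$; the goal is to exhibit a whole neighbourhood of $x$ inside $\tilde W$. This is exactly where the hypothesis $M/m>R_T$ enters. Because $m/x<1<M/x$ and $(M/x)/(m/x)=M/m>R_T$, I can choose $\gm\in(0,1)$ and $\gp\in\jn$ with $\gp/\gm>R_T$ and $[\gm x,\gp x]\subseteq\w I$; by the Limit ratio principle (Theorem~\ref{t1.2}) the set $\Tgg$ is then dense in $[\gm,\gp]$.

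The crux — and the step I expect to be the main obstacle — is to upgrade this density into the covering of an actual interval, since density of $\Tgg$ by itself yields only a dense set of reachable points, not a full neighbourhood of $x$. The resolution is to transport a fixed nondegenerate open interval rather than a single point. Concretely, choose $w\in\tilde W$ so close to $x$ that $[\gm w,\gp w]\subseteq\w I$ and $x\in(\gm w,\gp w)$; as $w$ lies in the open set $\tilde e^n(U)$ for some $n$, fix an open interval $O\ni w$ with $O\subseteq\tilde e^n(U)$ and $[\gm w',\gp w']\subseteq\w I$ for every $w'\in O$. For such a $w'$ and any $s=s_1\cdots s_p\in\Tgg$, concatenating the defining factorisation of $w'$ with $(s_1,\dots,s_p)$ keeps the trajectory inside $w'[\gm,\gp]\subseteq\w I$, so $w's\in\tilde e^{\,n+p}(U)\subseteq\tilde W$; thus $O\cdot\Tgg\subseteq\tilde W$. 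Since $O$ is a fixed nondegenerate interval and $\Tgg$ is dense in $[\gm,\gp]$, a routine check shows that the scaled copies $\{Os:s\in\Tgg\}$ already cover the interval $(\gm\,\sup O,\gp\,\inf O)$; shrinking $O$ about $w$, this interval still contains $x$, whence $x\in\tilde W$. This contradicts the choice of $x$ and proves $\w I\subseteq W$.

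Finally I would recover any endpoint of $I$ that belongs to $I$, say $m\in I$ (a finite $M\in I$ being symmetric). Since the hypothesis forces $R_T<M/m<+\infty$, the set $T$ is not $\{1\}$, so $T^{\ast}\cap\jn\neq\emptyset$; by Proposition~\ref{p1.3}\eqref{R.7} we have $\inf\big(T^{\ast}\cap\jn\big)\le R_T<M/m$, hence there is $t_0\in T^{\ast}\cap(1,M/m)$. Then $m t_0\in\w I\subseteq W$, and since $t_0^{-1}\in T^{\ast}$ and $m=(m t_0)\,t_0^{-1}$ with $m,\,m t_0\in I$, adjoining this last step gives $m\in W$. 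Therefore $I\subseteq W$, which completes the argument.
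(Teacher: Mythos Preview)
Your argument is correct. The only cosmetic slips are the phrase ``this contradicts the choice of $x$'' (you never assumed $x\notin\tilde W$; what you actually proved is $x\in\tilde W$, which is exactly the closedness of $\tilde W$) and the claim $M/m<+\infty$ in the endpoint discussion (if $m\in I$ but $M=+\infty$ this fails, yet the rest of the paragraph only needs $R_T<+\infty$, which does follow from the hypothesis). Neither affects the mathematics.

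Your route, however, is organized differently from the paper's. The paper fixes a single pair $\gm,\gp$ globally by choosing a small open $S\subset U$ with $R_T\,\dfrac{\sup S}{\inf S}<\dfrac{\sup I}{\inf I}$ and setting $\gm=\dfrac{\inf I}{\inf S}$, $\gp=\dfrac{\sup I}{\sup S}$; then $I=S\cdot[\gm,\gp]$ identically, so density of $\Tgg$ gives $I=S\cdot\Tgg$, and an induction shows $s\,T_{\gm,\gp,n}\subset e^n(\{s\})$ for $s\in S$, yielding \eqref{e12} directly. Unbounded open $I$ is then handled by exhausting with $I_k=I\cap(1/k,k)$, and endpoints are appended at the end. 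By contrast you run a connectedness argument on $\w I$: $\tilde W$ is open by construction, and for closedness you pick $\gm,\gp$ \emph{locally}, depending on the limit point $x$, and transport a small neighbourhood $O$ of a nearby $w\in\tilde W$ by $\Tgg$ to cover $x$. The payoff of your approach is that it treats all open $I$ (bounded or not) uniformly, avoiding the exhaustion step; the payoff of the paper's approach is that it is entirely constructive and never invokes connectedness, producing explicit words $t_1\cdots t_n\in T^{\ast}$ reaching every point of $I$ from $S$ via the inductive inclusion $s\,T_{\gm,\gp,n}\subset e^n(\{s\})$. The core mechanism---an open interval times the dense set $\Tgg$ covers an interval of ratio roughly $\gp/\gm$, while the admissibility constraints keep the trajectory inside $I$---is shared by both proofs.
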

\begin{proof} The assumption $\sup I/ \inf I>R_T$ implies that $\sup I>\inf I$, so the interval $I$ is nontrivial. 
Fix a nontrivial interval $U\subset I$. At first assume that $I$ is open, $\inf I>0$ and $\sup I<+\infty$. Take any nontrivial open interval $S\subset U$ such that $R_T \sup S/\inf S<\sup I/\inf I$, also $\inf I<\inf S$ and $\sup S<\sup I$. Then $0<\inf S< \sup S<+\infty$ and 
\[
\frac{\sup I/\sup S}{\inf I/\inf S}=\frac{\sup I/\inf I}{\sup S/\inf S}>R_T.
\]
Put
\[
\gm=\frac{\inf I}{\inf S}  \quad \mbox{and} \quad \gp= \frac{\sup I}{\sup S}.
\]
Then $\gm\in (0,1)$, $\gp\in \jn$ and $\gp/\gm>R_T$, and thus the set $\Tgg$ is dense in $\left[\gm,\gp\right]$ according to Theorem~\ref{t1.2}. Therefore, since 
\begin{eqnarray}\label{e9}
I=S\cdot\left[\frac{\inf I}{\inf S},\frac{\sup I}{\sup S} \right]=S\cdot \left[\gm,\gp\right]
\end{eqnarray}
and the set $S$ is open, we have
\begin{eqnarray}\label{e10}
I=S \cdot \Tgg.
\end{eqnarray}

Now, using induction, we prove that
\begin{eqnarray}\label{e11}
s\Tggn \subset e^n\left(\left\{s\right\}\right), \quad s\in S,
\end{eqnarray}
for all $n \in \n$. Observe that \eqref{e9} forces the inclusion
\[
s\Tggn \subset I, \quad s\in S, n \in \n.
\]
Thus 
\[
sT_{\gm,\gp,1} \subset \left(sT^{\ast}\right)\cap I=e\left(\left\{s\right\}\right)=e^{1}\left(\left\{s\right\}\right), \quad s\in S,
\]
which is \eqref{e11} for $n =1$. Fix a positive integer $n$ and assume \eqref{e11}. Take any $s \in S$ and $u \in T_{\gm,\gp,n+1}$. Then there exist $v \in \Tggn$ and $t \in T^{\ast}$ such that $u=vt$. Making use of \eqref{e11} we have $sv \in e^n\left(\left\{s\right\}\right)$. Moreover, \eqref{e9} implies the relation $su \in I$, and thus 
\begin{eqnarray*}
su=svt\in \left(svT^{\ast}\right)\cap I=e\left(\left\{sv\right\}\right) \subset e\left(e^n\left(\left\{s\right\}\right)\right)=e^{n+1}\left(\left\{s\right\}\right).
\end{eqnarray*}
Consequently, \eqref{e11} holds for all $n \in \n$.\\

It follows from \eqref{e10} and \eqref{e11} that
\begin{eqnarray*}
I=S\cdot \Tgg=\bigcup^{\infty}_{n=1}\bigcup_{s \in S} s\Tggn \subset \bigcup^{\infty}_{n=1}\bigcup_{s \in S} e^n\left(\left\{s\right\}\right)=
\bigcup^{\infty}_{n=1}e^n(S) \subset \bigcup^{\infty}_{n=0}e^n(U).
\end{eqnarray*} 
As the converse inclusion is obvious, we obtain the desired equality \eqref{e12}.\\

If $I$ is an arbitrary open interval contained in $\zn$ and $U\subset I$ is a nontrivial interval, then, putting $I_k=I\cap(1/k,k)$ and $U_k=U\cap(1/k,k)$ for all $ k \in \n$ with $k \ge R_T$, and applying the just proved assertion we have
\[
I_k=\bigcup^{\infty}_{n=0}e^n_k\left(U_k\right), \quad k \in \n,\ k \ge R_T,
\]
where $e_k$, mapping $2^{I_k}$ into itself, is given by 
\[
e_k(A)=\left(T^{\ast}\cdot A\right)\cap I_k.
\]
Hence
\[
I=\bigcup^{\infty}_{k=\ceil{R_T}}I_k=\bigcup^{\infty}_{k=\ceil{R_T}}\bigcup^{\infty}_{n=0}e^n_k\left(U_k\right)
\subset \bigcup^{\infty}_{n=0}\bigcup^{\infty}_{k=\ceil{R_T}}e^n\left(U_k\right)\subset \bigcup^{\infty}_{n=0}e^n\left(U\right) 
\]
and thus \eqref{e12} follows.\\

Finally consider the case  when a nontrivial interval $I$ contains at least one its endpoints.  Denote by $e_0$ the multifunction $e$ relativised to the interval $\w   I$, that is $e_0$ maps $2^{\w   I}$ into itself and is given by 
\[
e_0(A)=\left(T^{\ast}\cdot A\right)\cap \w   I.
\]
For any $A\subset \w   I$ we have
\[
e_0(A)=\left(T^{\ast}\cdot A\right)\cap \w   I\subset \left(T^{\ast}\cdot A\right)\cap I=e(A).
\]
Now, if $U\subset I$ is a nontrivial interval, then $\w   U\subset \w   I$, and thus, applying the just proved part of the lemma, we have
\[
\w   I =\bigcup^{\infty}_{n=0}e_0^n \left(\w   U\right)\subset\bigcup^{\infty}_{n=0}e^n \left(\w   U\right) \subset\bigcup^{\infty}_{n=0}e^n \left(U\right),
\]
that is
\begin{eqnarray}\label{en}
\w   I\subset \bigcup^{\infty}_{n=0}e^n \left(U\right).
\end{eqnarray}
Put $\alpha=\inf I$, $\beta=\sup I$ and assume for instance that $\alpha \in I$. If $T^{\ast}\cap \left(1,\beta/\alpha\right)=\emptyset$ then, using induction, one could prove that $T_{1,\beta/\alpha,n}\subset\left\{1,\beta/\alpha\right\}$ for all $n \in \n$, and thus 
$T_{1,\beta/\alpha}\subset\left\{1,\beta/\alpha\right\}$.
In particular, Theorem~\ref{t1.2} would yield 
$$R_T \ge \frac\beta\alpha=\frac{\sup I}{\inf I}>R_T,$$ a contradiction.

 Consequently, we can find $t_0\in T^{\ast}\cap \left(1,\beta/\alpha\right)$. Then $t_0\alpha \in \w   I$ and, by  \eqref{en}, we see that $t_0\alpha\in e^N(U)$ for some $N\in \n$. Therefore
\[
\alpha\in t^{-1}_0e^N(U)\cap I\subset \left(T^{\ast}\cdot e^N(U)\right)\cap I=e^{N+1}(U).
\]
Analogously we can prove that if $\beta \in I$ then $\beta\in \bigcup^{\infty}_{n=0}e^n \left(U\right)$, so the proof is completed.
\end{proof}

\begin{thm}[Extension theorem]\label{t2.2}
Let $T\subset \zn$ be a nonempty set and $I\subset \zn$ be an open interval with $\sup I/\inf I>R_T$. Let $J\subset \mr$ be an open interval and $\varphi:I\ra J$ be a solution of the simultaneous equations
\begin{eqnarray}\label{e13}
\varphi(tx)=g_t\left(x,\varphi(x)\right), \quad t \in T^{\ast}, 
\end{eqnarray}
postulated for $x \in I\cap t^{-1}I$, where $(g_t)_{t \in T^{\ast}}$ are continuous [analytic] functions mapping $I\times J$ into $J$. If there exists a nontrivial interval $U\subset I$ such that $\varphi|_{U}$ is continuous [analytic], then so is $\varphi$. 
\end{thm}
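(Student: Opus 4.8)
The plan is to show that the regularity (continuity or analyticity) of $\varphi$ spreads from $U$ to all of $I$ by following the orbit structure exposed in Lemma~\ref{l2.1}. Since continuity and analyticity are local properties and $I$ is open, it is natural to introduce
\[
W:=\big\{x\in I:\ \varphi\ \text{is continuous [analytic] on some open subinterval of }I\text{ containing }x\big\},
\]
which is an open subset of $I$. Because $\varphi|_{U}$ is continuous [analytic], we certainly have $\w U\subset W$. The whole argument then reduces to two facts: that $W$ is invariant under the selfmapping $e$ of $2^{I}$ from \eqref{e14}, i.e. $e(W)\subset W$, and that $e$ is monotone with respect to inclusion. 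Granting these, since $\w U$ is a nontrivial interval contained in $I$, Lemma~\ref{l2.1} gives $I=\bigcup_{n=0}^{\infty}e^{n}(\w U)$; monotonicity together with $e(W)\subset W$ yields $e^{n}(\w U)\subset e^{n}(W)\subset W$ for every $n$, whence $I\subset W$ and $\varphi$ is continuous [analytic] on all of $I$.

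The heart of the matter is the inclusion $e(W)\subset W$, which is exactly where the functional equation \eqref{e13} is used. Fix $y\in e(W)$; by the definition \eqref{e14} there are $t\in T^{\ast}$ and $x\in W$ with $y=tx\in I$. Choose an open subinterval $O\subset I$ with $x\in O$ on which $\varphi$ is continuous [analytic]. Since $I$ is open and multiplication by $t^{-1}$ is continuous, there is an open subinterval $O'\subset I$ with $y\in O'$ and $t^{-1}O'\subset O$. For every $z\in O'$ put $w:=t^{-1}z$; then $w\in O\subset I$ and $tw=z\in I$, so $w\in I\cap t^{-1}I$ and \eqref{e13} applies with this $t$, giving
\[
\varphi(z)=\varphi(tw)=g_t\big(t^{-1}z,\varphi(t^{-1}z)\big),\qquad z\in O'.
\]
On $O'$ the map $z\mapsto t^{-1}z$ is (real-)analytic, $\varphi$ is continuous [analytic] on $t^{-1}O'\subset O$ with values in $J$, and $g_t$ is continuous [analytic] on $I\times J$; hence the right-hand side is a continuous [analytic] function of $z$ on $O'$. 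Thus $\varphi$ is continuous [analytic] on the neighborhood $O'$ of $y$, so $y\in W$, proving $e(W)\subset W$. Monotonicity of $e$ is immediate from \eqref{e14}: if $A\subset B$ then $(T^{\ast}\cdot A)\cap I\subset(T^{\ast}\cdot B)\cap I$.

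I expect the only genuine point requiring care to be the analytic case of the propagation step: one must invoke that a composition of real-analytic maps is real-analytic, so that $z\mapsto g_t\big(t^{-1}z,\varphi(t^{-1}z)\big)$ is analytic once $\varphi$ is analytic near $t^{-1}y$ and $g_t$ is jointly analytic, the continuous case being the identical computation with \emph{analytic} replaced by \emph{continuous}. A secondary bookkeeping point is to shrink the neighborhood $O'$ so that both $z$ and $t^{-1}z$ remain in $I$ (making \eqref{e13} legitimately applicable), which is possible precisely because $I$ is open. No further density or metric input is needed beyond Lemma~\ref{l2.1}, which already encapsulates the assumption $\sup I/\inf I>R_T$.
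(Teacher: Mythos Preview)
Your proof is correct and follows essentially the same approach as the paper: both rely on Lemma~\ref{l2.1} and the propagation step ``regularity near $x$ plus equation \eqref{e13} gives regularity near $tx$''. The paper's version is terser, simply asserting that regularity on a nontrivial interval $V$ implies regularity on $e(V)$ and then ``applying this procedure iteratively''; your introduction of the invariant open set $W$ and the explicit verification that $e(W)\subset W$ (together with monotonicity of $e$) makes the iteration rigorous and handles the bookkeeping the paper leaves implicit.
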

\begin{proof}
It follows from the assumptions that $\sup I>\inf I$, so the interval $I$ is nontrivial. Define the mapping $e:2^I\ra 2^I$ by \eqref{e14}, that is 
\[
e(A)=\bigcup_{t \in T^{\ast}}tA\cap I, \quad A\subset I.
\]
Now \eqref{e13} implies that if $\varphi$ is continuous [analytic] on a nontrivial interval $V\subset I$, then it is continuous [analytic] on $e(V)$ which is an open set containing $V$. Thus, applying this procedure iteratively, starting from the interval $U$ and making use of Lemma \ref{l2.1}, we see that $\varphi$ is continuous [analytic] on $I$.
\end{proof}

In fact we can adapt this proof to other localizable properties such as measurability, differentiability, monotonicity, convexity (concavity), of course under suitable assumptions imposed on given functions $g_t$. 
Recall here that a property (P) of a function $\varphi \colon I \to \mathbb{R}$ is \emph{localizable} if (P) holds for~$f$ whenever (P) holds in an open neighbourhood of every point in $I$.

\section{Application to simultaneous equations \eqref{e2}}

Now we are in the position to strengthen Theorem~JP essentially. Assuming continuity of the solution of system  \eqref{e2} on an arbitrary nontrivial interval we determine the form of the solution on the whole domain. 

\begin{thm}\label{t3.1}
Let $T\subset \zn$ be a nonempty set, $c:T\ra \mr$ and $I\subset \zn$ be an interval with $\sup I/\inf I>R_T$. Let $\varphi:I\ra \mr$ be a solution of equations~\eqref{e2}, which is continuous  on a nontrivial subinterval of~$I$. \\
\noindent
$(i)$ If $p\neq 0$ then there exist $a,b \in \mr$ such that
\[
\varphi(x)=ax^p+b, \quad x \in I.
\] 
\noindent
$(ii)$ If $p= 0$ then there exist $a,b \in \mr$ such that
\[
\varphi(x)=a\log x+b, \quad x \in I.
\] 
\end{thm}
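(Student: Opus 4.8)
The plan is to run three machines in sequence: the Extension theorem (Theorem~\ref{t2.2}) to make $\varphi$ globally regular, Theorem~JP to read off its shape on a window, and the Extension lemma (Lemma~\ref{l2.1}) to spread that shape over all of $I$. First I would pass to the system indexed by $T^{\ast}$. Substituting $y=tx$ and inverting shows that any solution $\varphi$ of \eqref{e2} also satisfies $\varphi(tx)=\varphi(x)+\hat c(t)x^p$ for all $t\in T^{\ast}$, where $\hat c(t)=c(t)$ for $t\in T$, $\hat c(t^{-1})=-c(t)t^{-p}$, and $\hat c(1)=0$; that is, $\varphi$ solves \eqref{e13} with the analytic (hence continuous) functions $g_t(x,y)=y+\hat c(t)x^p$ on $I\times\mr$. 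Since Theorem~\ref{t2.2} is stated for open intervals, I would apply it on the interior of $I$, whose ratio $\sup I/\inf I$ is unchanged; the interval $U$ on which $\varphi$ is continuous meets the interior of $I$ in a nontrivial interval, so Theorem~\ref{t2.2} yields that $\varphi$ is continuous on the whole interior of $I$.

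Next I would extract the local form. As $\sup I/\inf I>R_T$, fix a ratio $\rho\in(R_T,\sup I/\inf I)$ and, using Theorem~\ref{t1.2}, choose $\gm\in(0,1)$ and $\gp\in\jn$ with $\gp/\gm=\rho$; then $\Tgg$ is dense in $[\gm,\gp]$ and $\inf I/\gm<\sup I/\gp$. Theorem~JP, applied to $\varphi$ on the interior of $I$, produces $a,b\in\mr$ with $\varphi(x)=ax^p+b$ (or $\varphi(x)=a\log x+b$ when $p=0$) on the nontrivial window $\big(\inf I/\gm,\sup I/\gp\big)$.

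The heart of the matter, and what I expect to be the main obstacle, is promoting this from one window to all of $I$: a single use of Theorem~JP can never cover the full interior, as that would force $\gp/\gm=1\le R_T$. Keeping $\rho$ fixed and letting $\gm$ run through $(1/\rho,1)$, the windows $\big(\inf I/\gm,\sup I/(\rho\gm)\big)$ move continuously and their union is exactly the interior of $I$; on each, Theorem~JP supplies constants $a_\gm,b_\gm$. Two windows with nearby parameters overlap in a nontrivial interval, on which the functions $x^p$ and $1$ are linearly independent (for $p=0$ one uses $\log x$ and $1$ instead), so $(a_\gm,b_\gm)$ is locally constant and, $(1/\rho,1)$ being connected, globally constant. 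Hence $\varphi=\psi$ on the interior of $I$, where $\psi(x)=ax^p+b$ (resp.\ $a\log x+b$).

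It remains to reach the endpoints. If $I$ contains, say, $\alpha=\inf I$ (write $\beta=\sup I$), then $R_T<\beta/\alpha$ together with Proposition~\ref{p1.3}.\eqref{R.7} gives a $t_0\in T^{\ast}\cap(1,\beta/\alpha)$ with $t_0\alpha$ in the interior; reading the value $\hat c(t_0)=a(t_0^p-1)$ (resp.\ $a\log t_0$) off the nonempty overlap where both the equation and $\varphi=\psi$ hold, and then evaluating \eqref{e13} at $x=\alpha$, forces $\varphi(\alpha)=\psi(\alpha)$, and symmetrically at $\beta$. An equivalent and tidier packaging of this last stage is to note that once $\hat c(t)=a(t^p-1)$ (resp.\ $a\log t$) is known for the relevant $t\in T^{\ast}$, the function $\psi$ itself solves the $T^{\ast}$-system, so the set $\{x\in I:\varphi(x)=\psi(x)\}$ contains a nontrivial interval and is invariant under the operator $e$ of \eqref{e14}; Lemma~\ref{l2.1} then delivers $\varphi=\psi$ on all of $I$, endpoints included.
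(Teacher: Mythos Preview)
Your argument is correct, but the route you take to propagate the formula from the window $(\inf I/\gm,\sup I/\gp)$ to the whole interior of $I$ differs from the paper's. The paper applies Theorem~\ref{t2.2} a second time, now in its \emph{analytic} version: since $\varphi$ agrees with $ax^p+b$ (resp.\ $a\log x+b$) on the window, it is analytic there, hence analytic on all of $\w I$; the identity principle for real-analytic functions then forces the same formula throughout $\w I$, and continuity (established in the first pass) carries it to any endpoints of $I$. You instead slide the window by varying $\gm$ with $\gp/\gm$ fixed, obtain constants $(a_\gm,b_\gm)$ on each, and match them on overlaps via the linear independence of $x^p,1$ (or $\log x,1$); the endpoints are then handled by a direct evaluation of the functional equation (or, equivalently, by Lemma~\ref{l2.1} applied to $\{\varphi=\psi\}$). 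Your approach is more elementary in that it never invokes analyticity and works purely within the continuous category, whereas the paper's is shorter and exploits the analytic clause of the Extension theorem already in hand. A minor remark: your parenthetical ``that would force $\gp/\gm=1\le R_T$'' is not quite the obstruction---Theorem~JP requires $\gm<1<\gp$, which already keeps the window strictly inside $\w I$---but this does not affect the argument.
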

\begin{proof}
We start with an extension of $c$ to the set $T^{\ast}$. If $1 \in T$ then \eqref{e2} forces  that $c(1)=0$. If $1 \notin T$ then define $c(1):=0$. Moreover, put 
\[
c(t):=-t^pc\left(t^{-1}\right)
\] 
for all $t \in T^{-1}$ and observe that whenever $t \in T\cap T^{-1}$ the above equality follows from \eqref{e2}. It is an easy task to check that $\varphi$ is a solution of the simultaneous equations
\[
\varphi(tx)=\varphi(x)+c(t)x^p, \quad t \in T^{\ast},
\]  
postulated for $x \in I\cap t^{-1} I$ (cf. also the second paragraph of the proof of \cite[Proposition 1]{JP}).
Now, on account of Lemma \ref{l2.1}, the function $\varphi$ is continuous on $I$.

Choose any $\gm \in (0,1)$ and $\gp\in \jn$ such that $\sup I/\inf I> \gp/\gm >R_T$. Then
\[
\frac{\inf I}{\gm}<\frac{\sup I}{\gp}
\]
and, by the definition of $R_T$, the set $\Tgg$ is dense in $\left[\gm, \gp\right]$. Therefore Theorem~JP implies \eqref{e16} when $p\neq 0$ and \eqref{e17} in the case $p=0$. This forces that $\varphi$ is analytic on the interval $J:=\left({\inf I}/{\gm},{\sup I}/{\gp}\right)$, and thus, by Theorem \ref{t2.2}, on the interior of $I$. 
Therefore the formula of the function $\varphi$ in the interior of $I$ is the same as that of its restriction $\varphi|_{J}$. Moreover, since $\varphi$ is continuous, it is also valid for every $x \in I$.
%
\end{proof}

This theorem has the important corollary in the case $c \equiv 0$.
\begin{cor}
Let $T\subset \zn$ be a nonempty set, and $I\subset \zn$ be an interval with $\sup I/\inf I>R_T$. Let $\varphi:I\ra \mr$ be a solution of equations 
$$
\varphi(tx)=\varphi(x), \qquad t \in T,
$$
which is continuous  on a nontrivial subinterval of~$I$. Then $\varphi$ is constant.
\end{cor}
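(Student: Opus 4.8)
The plan is to derive this corollary directly from Theorem~\ref{t3.1} rather than proving it from scratch. The equations $\varphi(tx)=\varphi(x)$ are exactly the system \eqref{e2} in the special case $c\equiv 0$ and $p=0$, since for $p=0$ we have $x^p=1$ and the right-hand side $\varphi(x)+c(t)x^p$ collapses to $\varphi(x)$. All the hypotheses of Theorem~\ref{t3.1} are present verbatim: $T\subset\zn$ is nonempty, $I\subset\zn$ is an interval with $\sup I/\inf I>R_T$, and $\varphi:I\ra\mr$ is a solution continuous on a nontrivial subinterval of $I$.

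First I would invoke part $(ii)$ of Theorem~\ref{t3.1}, which (taking $c\equiv 0$ and $p=0$) guarantees constants $a,b\in\mr$ with
\[
\varphi(x)=a\log x+b,\qquad x\in I.
\]
It then remains only to show that $a=0$. To see this, substitute this form back into the equation $\varphi(tx)=\varphi(x)$: for any $t\in T$ with $t\neq 1$ and any $x\in I\cap t^{-1}I$ we get $a\log(tx)+b=a\log x+b$, hence $a\log t=0$. Since $\sup I/\inf I>R_T\ge 1$ forces $I$ to be nontrivial, and by the argument at the end of the proof of Lemma~\ref{l2.1} the set $T^{\ast}\cap\jn$ is nonempty (otherwise $R_T\ge\sup I/\inf I>R_T$), there exists some $t\in T^{\ast}$ with $t>1$, and correspondingly some element of $T$ different from $1$; for such $t$ we have $\log t\neq 0$, whence $a=0$. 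Therefore $\varphi\equiv b$ is constant.

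The only point requiring a little care is guaranteeing the existence of a genuinely nonconstant scaling factor, i.e.\ an element of $T^{\ast}$ distinct from $1$, so that the relation $a\log t=0$ actually forces $a=0$. This is exactly what the hypothesis $\sup I/\inf I>R_T$ delivers, via the same nonemptiness argument for $T^{\ast}\cap\jn$ already used in the proof of the Extension lemma. I expect this to be the only mild obstacle; the rest is a direct specialization of the already-established Theorem~\ref{t3.1}, so the proof should be short.
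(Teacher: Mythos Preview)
Your argument is correct, but the paper takes a slightly different and slicker route. Since $c\equiv 0$ makes system \eqref{e2} independent of $p$, the paper applies Theorem~\ref{t3.1} \emph{twice}, with $p=1$ and $p=2$, obtaining simultaneously $\varphi(x)=a_1x+b_1$ and $\varphi(x)=a_2x^2+b_2$ on all of $I$; equating these as polynomials immediately forces $a_1=a_2=0$, hence $\varphi$ is constant. This avoids any substitution back into the original equation and therefore sidesteps the need to exhibit a $t\in T$ with $t\neq 1$ and $I\cap t^{-1}I\neq\emptyset$.

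Your approach via $p=0$ works too, but the last step deserves a touch more care than you give it: from $T^{\ast}\cap\jn\neq\emptyset$ you correctly infer that $T$ contains some $t\neq 1$, but to conclude $a\log t=0$ you also need $I\cap t^{-1}I\neq\emptyset$ so that the functional equation has content. This is guaranteed once you quote the stronger fact $T^{\ast}\cap(1,\sup I/\inf I)\neq\emptyset$ from the end of the proof of Lemma~\ref{l2.1} (which you already cite), since any $t_0$ in that set yields either $t_0\in T$ or $t_0^{-1}\in T$, and in either case the corresponding domain $I\cap t^{-1}I$ is a nontrivial interval. With that small addition your proof is complete; the paper's two-values-of-$p$ trick simply trades this verification for a second invocation of Theorem~\ref{t3.1}.
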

\begin{proof}
 Applying Theorem~\ref{t3.1} with $c \equiv 0$ and $p\in\{1,2\}$ we obtain $\varphi(x)=a_1x+b_1$ and $\varphi(x)=a_2 x^2+b_2$ for some $a_1,b_1,a_2,b_2 \in \mr$.
 
 Thus $a_1x+b_1$ and $a_2x^2+b_2$ are the same polynomials, which implies $a_1=a_2=0$ and $b_1=b_2$. Finally we get $\varphi \equiv b_1$.
\end{proof}

At the very end of the paper we show that our key assumption ${\sup I}/{\inf I}>R_T$ is essential and not only technical.

\begin{ex}\label{ex3.1}
Take $I=\left[1/2,5/2\right]$ and $T=\left\{1/2,3\right\}$. Then $\sup I/\inf I=5$ and (cf. Proposition~\ref{p1.3} \eqref{R.1}) $R_T=6$, so $\sup I/\inf I<R_T$. Put also $g_t(x,y)=y$ for $t\in T$, $x\in I$ and $y \in \mr$. Now the system \eqref{e2} takes the form
\begin{eqnarray}\label{e18}
\left\{\begin{array}{ll}
\varphi\left(\frac{1}{2}x\right)=\varphi(x),\\
\varphi\left(3x\right)=\varphi(x).
\end{array}\right.
\end{eqnarray} 
The first equation is postulated for $x \in I\cap \left(\frac{1}{2}\right)^{-1}I=\left[\frac{1}{2},\frac{5}{2}\right]\cap\left(2\cdot\left[\frac{1}{2},\frac{5}{2}\right]\right)=\left[\frac{1}{2},\frac{5}{2}\right]\cap\left[1,5\right]=\left[1,\frac{5}{2}\right]$, whereas the second one for $x \in I\cap 3^{-1}I=\left[\frac{1}{2},\frac{5}{2}\right]\cap\left(\frac{1}{3}\cdot\left[\frac{1}{2},\frac{5}{2}\right]\right)=\left[\frac{1}{2},\frac{5}{2}\right]\cap \left[\frac{1}{6},\frac{5}{6}\right]=\left[\frac{1}{2},\frac{5}{6}\right]$. Equations \eqref{e18} generate also two others for $t \in T^{-1}=\left\{\frac{1}{3},2\right\}$:
\begin{align*}
\begin{cases}
\varphi\left(\frac{1}{3}x\right)=\varphi(x),\\
\varphi\left(2x\right)=\varphi(x), 
\end{cases} 
\end{align*}

and thus $g_t(x,y)=y$, $x \in I$, $y \in \mr$, also for $t \in T^{-1}$.
We  construct a family of continuous solutions which are continuous but not analytic, possibly even not differentiable. Let $s: [0,1]\ra \mr$ be any continuous function satisfying $s(0)=s(1)=0$. For arbitrary $\alpha,\beta \in \mr$ with $\alpha<\beta$ define a function $Q_{\alpha,\beta}:\mr \ra \mr$  by 
\begin{eqnarray*}
Q_{\alpha,\beta}(x)=\left\{\begin{array}{ll}
s\left(\frac{\beta-x}{\beta-\alpha}\right), \quad &\mbox{if } x \in \left(\alpha,\beta\right),\\
0, \quad &\mbox{otherwise. }
\end{array}\right.
\end{eqnarray*} 
Any such a function is continuous and satisfies the condition
\[
Q_{\alpha,\beta}(cx)=Q_{\alpha/c,\beta/c}(x), \quad x \in \mr.
\]
Using the last property one can easily check that the function
\[
\varphi_0:=Q_{5/8,2/3}+Q_{15/16,1}+Q_{5/4,4/3}+Q_{15/8,2}
\]
is a solution of system \eqref{e18}. 

\begin{figure}
 \begin{center}
\begin{tikzpicture}[xscale=5]
\draw (0.5,0)--(0.625,0);
\draw (0.625,0) parabola[parabola height=1cm] +(0.04166,0);
\draw (0.6666666,0)--(0.9375,0);
\draw (0.9375,0) parabola[parabola height=1cm] +(0.0625,0);
\draw (1,0) --(1.25,0);
\draw (1.25,0) parabola[parabola height=1cm] +(0.0833333,0);
\draw (1.333333,0) -- (1.875,0);
\draw (1.875,0) parabola[parabola height=1cm] +(0.125,0);
\draw (2,0)--(2.5,0);
\node at (0.5,-0.3) {$\tfrac12$};
\node at (0.625,-0.3) {$\tfrac58$};
\node at (0.666666666,-0.3) {$\tfrac23$};
\node at (0.9375,-0.3) {$\tfrac{15}{16}$};
\node at (1,-0.3) {$1$};
\node at (1.25,-0.3) {$\tfrac54$};
\node at (1.333333333,-0.3) {$\tfrac43$};
\node at (1.5,-0.3) {$\tfrac32$};
\node at (1.875,-0.3) {$\tfrac{15}8$};
\node at (2,-0.3) {$2$};
\node at (2.5,-0.3) {$\tfrac52$};
\draw[style=dashed] (0.65833333,1.1) to [bend left=45] (1.3166666666,1.2);
\draw[style=loosely dashed] (0.658333,1.14) to [bend left=75] (1.9375,1.14);
\draw[style=dashed] (0.9875,1.1) to [bend left=65] (1.9375,1.1);
\end{tikzpicture}
\caption{\small{Plot of the function $\varphi_0$. All assertions implied by the system \eqref{e18} are presented by dashed lines.}}
\end{center}
\end{figure}
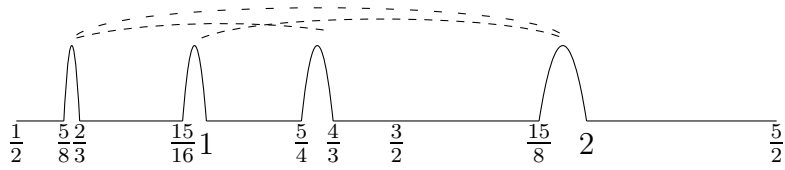
\end{ex}

\begin{small}\noindent (W. Jarczyk) 
{\sc Institute of Mathematics, 
University of Zielona Gora, 
prof. Z.~Szafrana~4a, 
PL-65-516 Zielona G\'ora, 
Poland \\
e-mail: {\tt w.jarczyk@im.uz.zgora.pl}
}

\bigskip\noindent (P. Pasteczka) 
{\sc  Institute of Mathematics,
University of the National Education Commission,
Podchor\k{a}\.zych 2, 
PL-30-084 Krak\'ow,
Poland \\
e-mail: {\tt pawel.pasteczka@up.krakow.pl}} 
\end{small}

\end{document}